\documentclass{amsart}

\usepackage{amsmath, amsthm, amssymb}
\usepackage{color}
\usepackage{hyperref}
\newtheorem{theorem}{Theorem}
\newtheorem{lemma}[theorem]{Lemma}

\newcommand{\Rl}{\mathbb{R}}

\newcommand{\Hi}{\operatorname{\mathbf{H}}}

\begin{document}

\title{Long time Solutions for a Burgers-Hilbert Equation via a Modified Energy Method}

\author{John K. Hunter}
\address{ Department of Mathematics, University of California at Davis}
\thanks{The first author was partially supported by the NSF under grant number DMS-0072343.}
\email{hunter@math.davis.edu}
\author{Mihaela Ifrim}
 \address{Department of Mathematics, McMaster University}.
\email{mifrim@math.mcmaster.ca}

\author{ Daniel Tataru}
\address{Department of Mathematics, University of California at Berkeley}
 \thanks{The third author was partially supported by the NSF grant DMS-0801261
as well as by the Simons Foundation}
\email{tataru@math.berkeley.edu}

\author{Tak Kwong Wong}
\address{Department of Mathematics\\ University of Pennsylvania}
\email{takwong@math.upenn.edu}

\date{\today}

\begin{abstract}
We consider an initial value problem for a quadratically nonlinear inviscid
Burgers-Hilbert equation that models the motion of vorticity discontinuities. We use a
modified energy method to prove the existence of small, smooth solutions
over cubically nonlinear time-scales. 
\end{abstract}

\maketitle

%\begin{keywords}
%Nonlinear waves, inviscid Burgers equation, vorticity discontinuities.
%\end{keywords}

%\begin{AMS}
%37L65, 76B47.
%\end{AMS}
%\pagestyle{myheadings}
%\thispagestyle{plain}
\markboth{HUNTER, IFRIM, TATARU, WONG}{BURGERS-HILBERT EQUATION}
%\markright{HUNTER AND TATARU}{BURGERS-HILBERT EQUATION}

%\maketitle
\section{Introduction}

We consider the following initial value problem for an inviscid Burgers-Hilbert equation for $u(t,x)$:
\begin{align}
\label{bheq}
\begin{split}
&u_t+ u u_x = \mathbf{H}\left[ u\right] ,\\
&u(0,x) = u_0(x),
\end{split}
\end{align}
where $\mathbf{H}$ is the spatial Hilbert transform on $\Rl$, and
the initial data $u_0$ is sufficiently small
\begin{equation*}
\Vert u_{0}(x)\Vert _{H^2(\mathbb{R})}\leq \epsilon \ll 1.
\end{equation*}
Here, $H^k(\Rl)$ denotes the standard Sobolev space of functions with $k$ weak $L^2$-derivatives.

Equation (\ref{bheq}) was proposed in \cite{biello} as a model equation for nonlinear waves with constant frequency.
It also provides a formal asymptotic approximation
for the small-amplitude motion of a planar vorticity
discontinuity located at $y = u(t,x)$ \cite{biello, marsden}.
Moreover, even though the equation is quadratically nonlinear, this approximation
is valid over cubically nonlinear time-scales.

Equation (\ref{bheq}) is nondispersive, and solutions of the linearized equation oscillate in time but do not
exhibit any dispersive decay. Nevertheless,
in comparison with the inviscid Burgers equation, small smooth solutions of (\ref{bheq}) have an enhanced, cubically nonlinear lifespan.
This enhanced lifespan was observed numerically in \cite{biello} and proved in \cite{hunter} by use of a change of the independent variable, which was suggested by a normal form transformation of the equation.

In this paper we give a different, and simpler, proof of the enhanced lifespan of smooth
solutions of (\ref{bheq}) from the one in \cite{hunter}.  Our main result is the following theorem:

\begin{theorem}
\label{th:main}
$a)$ Let $k \ge 2 $. Suppose that the initial data $u_0 \in H^k(\Rl)$ for the equation \eqref{bheq} satisfies $$ \Vert u_{0}\Vert _{H^2(\mathbb{R})}\leq \epsilon \ll 1 .$$
Then  there exists a solution $
u \in C\left( I^\epsilon ;{H}^{k}\left( \mathbb{R}\right) \right) \cap C^1 \left(I^\epsilon;{H}^{k-1}\left(\mathbb{R}\right) \right)$
of (\ref{bheq})
defined on the time-interval $I^\epsilon=\left[-{\alpha}/{\epsilon^2},{\alpha}/{\epsilon^2}\right]$, where $\alpha >0$ is a universal constant, so that
\begin{equation*}
\Vert u\Vert _{L^{\infty}(I^{\epsilon}; H^{s}(\mathbb{R}))}\lesssim \Vert u(0)\Vert _{ H^s(\mathbb{R})}, \qquad 0\leq s\leq k.
\end{equation*}
\\
$b)$ Consider $u_1, u_2$ two solutions of \eqref{bheq} as in part $(a)$. Then
\begin{equation*}
\Vert u_1-u_{2}\Vert _{L^{\infty}(I^{\epsilon}; L^2(\mathbb{R}))}\lesssim \Vert u_1(0)-u_{2}(0)\Vert _{ L^2(\mathbb{R})}.
\end{equation*}
\end{theorem}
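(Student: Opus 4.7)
The plan is to prove Theorem~\ref{th:main} by establishing uniform $H^k$ a priori bounds on the interval $I^\epsilon$ via a \emph{modified energy method}, and then deducing both existence and $L^2$-Lipschitz continuous dependence by standard techniques. A direct $H^k$ energy estimate, using the skew-adjointness of $\mathbf{H}$ and integration by parts, gives
$$\frac{1}{2}\frac{d}{dt}\|\partial_x^k u\|_{L^2}^2 = -\int \partial_x^k u\,\partial_x^k(uu_x)\,dx,$$
whose leading contribution is a constant multiple of the cubic expression $\int u_x (\partial_x^k u)^2\,dx$. Under the bootstrap assumption $\|u\|_{H^2} \lesssim \epsilon$, this term is of size $\epsilon\|u\|_{H^k}^2$, and Gronwall by itself yields only a lifespan of order $1/\epsilon$. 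To reach the cubic time scale $1/\epsilon^2$, one needs to promote the loss from $\epsilon$ to $\epsilon^2$.

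I would therefore design a trilinear correction $Q_k(u)$ so that the modified energy
$$\tilde E_k(u) := \|u\|_{H^k}^2 + Q_k(u)$$
is equivalent to $\|u\|_{H^k}^2$ for $u$ small in $H^2$ and satisfies
$$\left|\tfrac{d}{dt}\,\tilde E_k(u(t))\right| \lesssim \|u(t)\|_{H^2}^2 \, \tilde E_k(u(t)).$$
The correction $Q_k$ is chosen so that when $u_t = \mathbf{H}u - uu_x$ is substituted into its time derivative, the cubic quantity generated by the linear part $\mathbf{H}u$ exactly cancels the bad term $\int u_x (\partial_x^k u)^2\,dx$. Because the quadratic interactions in $uu_x$ are resonant only when the interacting frequencies all share the same sign, this cancellation is morally a normal-form transformation performed at the level of the energy rather than on the unknown, and $Q_k$ will naturally involve the Hilbert transform; candidate building blocks are cubic integrals of the form $\int u\,\mathbf{H}\bigl[(\partial_x^k u)^2\bigr]\,dx$ and $\int \mathbf{H}u\,(\partial_x^k u)^2\,dx$. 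Once the above estimate is established, Gronwall gives $\tilde E_k(u(t)) \le \epsilon^2 e^{C\epsilon^2 |t|}$, which closes the bootstrap on $I^\epsilon$ provided the universal constant $\alpha$ is chosen small enough.

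With uniform a priori bounds in hand, part $(a)$ follows by a standard approximation scheme (for example parabolic regularization $u_t - \nu u_{xx} + uu_x = \mathbf{H}u$ with $\nu \downarrow 0$, whose globally defined smooth solutions inherit the modified-energy estimate with an extra favourably signed dissipative term), followed by compactness and a weak-limit argument; continuity in time is recovered by Aubin--Lions together with weak continuity in $H^k$. Part $(b)$ uses the same philosophy at the $L^2$ level: the difference $w = u_1 - u_2$ satisfies the linear equation
$$w_t + \tfrac{1}{2}\partial_x\bigl((u_1 + u_2)\,w\bigr) = \mathbf{H}w,$$
and a naive $L^2$ estimate produces the bad cubic term $\tfrac{1}{4}\int \partial_x(u_1+u_2)\,w^2\,dx$ of size $\epsilon\|w\|_{L^2}^2$, which again only yields the lifespan $1/\epsilon$. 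I would introduce a trilinear correction $R(u_1,u_2,w)$ so that $\tilde F(w) := \|w\|_{L^2}^2 + R(u_1,u_2,w)$ satisfies $|\tfrac{d}{dt}\tilde F| \lesssim \epsilon^2 \tilde F$, which propagates the Lipschitz bound across $I^\epsilon$.

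The main obstacle is the algebraic construction of $Q_k$ (and the analogous $R$): one must enumerate all cubic-in-$u$ terms generated by differentiating candidate cubic functionals along the flow, identify the linear combination that cancels $\int u_x (\partial_x^k u)^2\,dx$ exactly, and verify that the residual terms are genuinely quartic and dominated by $\|u\|_{H^2}^2\|u\|_{H^k}^2$. The secondary technical issues — commutators with the Hilbert transform and the bookkeeping of where derivatives fall in paraproduct-type high/low frequency interactions — are then handled by standard harmonic analysis estimates once the correct algebraic structure is in place.
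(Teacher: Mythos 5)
Your proposal is essentially the paper's own argument: a cubic correction to the $H^k$ energy chosen so that, along the flow, all cubic terms cancel (a normal form performed at the level of the energy), equivalence of the modified energy with $\|u\|_{H^k}^2$ for data small in $H^2$, a Gronwall/bootstrap argument yielding the $\epsilon^{-2}$ lifespan, and the analogous $L^2$-level modified energy for the linearized equation in part $(b)$. The ``main obstacle'' you defer --- the explicit construction of $Q_k$ --- is resolved in the paper by reading the correction off the normal form transformation $v = u + \mathbf{H}[\mathbf{H}u\cdot \mathbf{H}u_x]$, which gives $E_k(u)=\tfrac12\|\partial_x^k u\|_{L^2}^2+\langle\partial_x^k u,\partial_x^k\mathbf{H}[\mathbf{H}u\cdot \mathbf{H}u_x]\rangle$ and guarantees that the remainder is genuinely quartic; the resulting borderline quartic terms (with $k+1$ derivatives on two factors) are then handled exactly as you anticipate, via commutator estimates for $[\mathbf{H},u]$.
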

The quadratically nonlinear terms in (\ref{bheq}) are non-resonant, and a standard approach for problems of this type  is to use a normal form transformation (see for instance \cite{shatah}) to remove them. However, this is a quasi-linear problem, so the normal form transformation is unbounded, and this method cannot be applied directly. Our alternative approach is to use the normal form transformation in order to derive a modified energy functional for the original problem \eqref{bheq} which evolves according to a cubic law.

The same modified energy method also applies to the linearization of the Burgers-Hilbert equation, and allows us to control differences of solutions on a cubic time-scale.

We remark that with some additional work one can replace the $H^2$ norm for the data in part $(a)$ of the above theorem with $H^{s_0}$ for any $s_0> 3/2$. Similarly, the $L^2$ norm in part $(b)$ can be replaced by $H^s$ for a smaller range $0 \leq s \leq s_0 -1$.

\section{The Normal Form Transformation}

Straightforward energy estimates applied to (\ref{bheq}) yield
\[
\frac{d}{dt} \| \partial_x^k u\|_{L^2}^2 \lesssim \|u_x\|_{L^\infty}
\|u\|_{H^k}^2
\]
for $k\ge 2$, which gives a lifespan of smooth solutions of the order $\epsilon^{-1}$,
as in the standard local existence theory for quasi-linear hyperbolic PDEs \cite{kato, majda}.

The quadratically nonlinear terms in (\ref{bheq}) can be  removed by a normal form transformation \cite{biello,hunter}
\begin{equation}
\label{nft}
v = u + \mathbf{H}[\mathbf{H}u\cdot \mathbf{H}u_x ].
\end{equation}
The transformed equation has the form
\begin{equation}
v_{t}+ \mathcal{Q}(u)=\mathbf{H}\left[v \right]
\label{veq}
\end{equation}
where $\mathcal{Q}(u)$ is cubic in $u$ but involves two spatial derivatives.
Straightforward energy estimates for (\ref{veq}) yield
\begin{equation*}
\frac{d}{dt} \| \partial_x^k v\|_{L^2}^2 \lesssim \|u_x\|^2_{L^\infty}
\|u\|_{H^{k+1}}^2.
\end{equation*}
However, since the above normal form transformation is not invertible,
the energy estimates for $v$ do not close.
Thus, the $v$-equation is cubically nonlinear, but there is a loss of derivatives. On the other hand,
the $u$-equation is quadratically nonlinear, but there is no loss of derivatives.

The reason for this disparity in energy estimates is that the $H^k$-norms of $u$ and $v$ are not comparable.
From (\ref{nft}), we have
\begin{equation}
\|\partial_x^k v\|_{L^2}^2 = \|\partial_x^k u\|_{L^2}^2 + 2\langle \partial_x^k u, \partial_x^k \mathbf{H}[\mathbf{H}u \cdot \mathbf{H}u_x]\rangle
+ \|\partial_x^k\mathbf{H}[\mathbf{H}u \cdot \mathbf{H}u_x]\|_{L^2}^2.
\label{vkenergy}
\end{equation}
The second term on the right-hand side of (\ref{vkenergy}) is comparable to the $H^k$-norm of $u$ because
\[
\langle \partial_x^k u, \partial_x^k \mathbf{H}[\mathbf{H}u \cdot \mathbf{H}u_x]\rangle
= \left(k+\frac{1}{2}\right) \langle \mathbf{H} u_x, (\partial_x^k \mathbf{H} u)^2\rangle
+ l.o.t.
\]
where $l.o.t.$ stands for lower order terms. The third term on the right-hand side of (\ref{vkenergy}) is not comparable to the $H^k$-norm, but it is quartic and therefore irrelevant to the cubically nonlinear energy estimates.

This discussion suggests that we define a modified energy functional by dropping the higher-derivative quartic term
from the right-hand side of (\ref{vkenergy}) to get
\begin{equation}
\label{henergy}
E_k(u) = \frac{1}{2} \|\partial_x^k u\|_{L^2}^2 +  \langle \partial_x^k u,\partial_x^k
 \mathbf{H}[\mathbf{H}u \cdot \mathbf{H}u_x]\rangle.
\end{equation}
As we will show, this modified energy is equivalent to the standard $H^k$-energy and satisfies
cubically nonlinear estimates without a loss of derivatives.

\section{The Modified Energy Method}

In this section, we use the modified energy functional introduced in \eqref{henergy} to prove Theorem~\ref{th:main}. We first show that $E_k$ energy is equivalent to the standard $H^k$ energy.

\begin{lemma}
\label{lemma:lemaenergy}
Let $E_k$ be defined by (\ref{henergy}). Then
\begin{equation*}
E_k(u) = \frac{1}{2} \|\partial_x^k u\|_{L^2}^2 (1+ O(\| \mathbf{H}u_x\|_{L^\infty})).
\end{equation*}
\end{lemma}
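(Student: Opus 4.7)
The plan is to show that the cubic correction
\[
R \;:=\; \langle \partial_x^k u,\, \partial_x^k \Hi[\Hi u\cdot \Hi u_x]\rangle
\]
satisfies $|R|\le C_k\|\Hi u_x\|_{L^\infty}\|\partial_x^k u\|_{L^2}^2$; since $E_k(u)=\tfrac12\|\partial_x^k u\|_{L^2}^2+R$ by the definition \eqref{henergy}, this is exactly the stated factorization.

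Setting $w=\Hi u$ and exploiting that $\Hi$ commutes with $\partial_x$, is skew-adjoint on $L^2$, and satisfies $\Hi^2=-\Id$, I first rewrite
\[
R \;=\; -\langle \partial_x^k w,\, \partial_x^k(w w_x)\rangle
\]
and note $\|\partial_x^k u\|_{L^2}=\|\partial_x^k w\|_{L^2}$, so the problem reduces to estimating the above inner product in terms of $\|w_x\|_{L^\infty}$ and $\|\partial_x^k w\|_{L^2}$. I would then expand by Leibniz,
\[
\partial_x^k(w w_x) \;=\; \sum_{j=0}^k \binom{k}{j}\,\partial_x^j w\cdot \partial_x^{k+1-j}w,
\]
and split the sum into endpoint terms ($j=0,1,k$) and middle terms ($2\le j\le k-1$).

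For the endpoint piece, one integration by parts gives $\langle\partial_x^k w,\, w\,\partial_x^{k+1}w\rangle = -\tfrac12\int w_x(\partial_x^k w)^2\,dx$; added to the $j=1$ and $j=k$ contributions, the endpoint piece boils down to a multiple of $\int w_x(\partial_x^k w)^2\,dx$, which Hölder bounds by $C\|w_x\|_{L^\infty}\|\partial_x^k w\|_{L^2}^2$. The main (though routine) step is the middle range. Here I would rewrite each product as $\partial_x^{j-1}v\cdot \partial_x^{k-j}v$ with $v:=w_x$, so the two factors have derivative orders summing to $k-1$, and apply the 1D Gagliardo--Nirenberg--Moser interpolation
\[
\|\partial_x^{j-1}v\|_{L^{p_1}}\|\partial_x^{k-j}v\|_{L^{p_2}}\;\lesssim\;\|v\|_{L^\infty}\|\partial_x^{k-1}v\|_{L^2},
\qquad p_1=\tfrac{2(k-1)}{j-1},\ p_2=\tfrac{2(k-1)}{k-j},
\]
(the cases $j-1=0$ or $k-j=0$ reducing to a trivial $L^\infty$ bound). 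A final Cauchy--Schwarz against $\partial_x^k w=\partial_x^{k-1}v$ then bounds each middle term by $C\|w_x\|_{L^\infty}\|\partial_x^k w\|_{L^2}^2$. Note that for $k=2$ the middle range is empty, so the base case is immediate.

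Summing all contributions yields $|R|\lesssim\|\Hi u_x\|_{L^\infty}\|\partial_x^k u\|_{L^2}^2$, which is the required estimate. The only real obstacle is the Moser-type bookkeeping in the middle sum; once one recognizes that every term has exactly one ``$w_x$''-derivative in excess of $k$ total derivatives, the Gagliardo--Nirenberg estimates distribute the remaining derivatives in the unique way that makes the exponents match $L^\infty\cdot L^2\cdot L^2$.
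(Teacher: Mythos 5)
Your proof is correct and takes essentially the same route as the paper's. After conjugating by the Hilbert transform, your $j=0$ integration by parts $\langle \partial_x^k w, w\,\partial_x^{k+1}w\rangle=-\tfrac12\int w_x(\partial_x^k w)^2\,dx$ is exactly the paper's computation of $\langle f, T_u f\rangle$ for the essentially skew-adjoint operator $T_u f=\mathbf{H}[\mathbf{H}u\cdot\mathbf{H}f_x]$ with $f=\partial_x^k u$, and your Gagliardo--Nirenberg handling of the remaining Leibniz terms is the paper's interpolation bound $\|\partial_x^j\mathbf{H}u_x\cdot\partial_x^{k-1-j}\mathbf{H}u_x\|_{L^2}\lesssim\|\mathbf{H}u_x\|_{L^\infty}\|\partial_x^{k-1}u_x\|_{L^2}$.
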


\begin{proof}
Denote
\begin{equation*}
T_u f := \mathbf{H}\left[ \mathbf{H}u \cdot \mathbf{H}f_x\right] .
\end{equation*}
This operator is essentially skew-adjoint. Moreover, we have
\begin{equation*}
\begin{aligned}
\langle f, T_u f \rangle &= - \langle \mathbf{H}f, \mathbf{H}u \cdot \mathbf{H}f_x \rangle \\
&= \frac{1}{2} \int_{\mathbb{R}} \mathbf{H}u_x \vert \mathbf{H}f\vert ^2 dx = O(\| \mathbf{H}u_x\|_{L^\infty}) \|f\|_{L^2}^2.
\end{aligned}
\end{equation*}

We write
\begin{equation*}
\partial_x^k  \mathbf{H}\left[\mathbf{ H}u \cdot \mathbf{H} u_x\right] = T_u \partial_x^k u + l.o.t.
\end{equation*}
where $l.o.t.$ stands for terms of the form $\mathbf{H} [ \partial_x^j\mathbf{ H}u_x \cdot \partial_x^{k-1-j}\mathbf{H} u_x ]$
with $0 \leq j \leq k-1$. For the leading term we use the previous estimate for $T_u$, and for the lower order terms we use
interpolation,
\[
\| \partial_x^j\mathbf{ H}u_x \cdot \partial_x^{k-1-j}\mathbf{H} u_x\|_{L^2} \lesssim \|\mathbf{H}u_x\|_{L^\infty} \|\partial_x^{k-1} u_x\|_{L^2}
\]
Thus, we obtain
\begin{equation*}
\langle \partial_x^k u,\partial_x^k
 \mathbf{H}\left[ \mathbf{H}u \cdot \mathbf{H} u_x\right] \rangle =  O(\| \mathbf{H}u_x\|_{L^\infty}) \|\partial_x^k u\|_{L^2}
\end{equation*}
\end{proof}
%%%%%%%%%%%%%%%%%%%%%%%%%%%%

The energy estimate is discussed in the following lemma:
\begin{lemma}
For the energy (\ref{henergy}) the estimate below holds: for any $\delta>0$,
\begin{equation*}
\frac{d}{dt} E_k(u) \leq  C_{k,\delta} \Vert u_x\Vert _{H^{\frac12+\delta}}^2\|u\|_{H^k}^2
\end{equation*}
where $C_{k,\delta}$ is a constant depending on $k$ and $\delta$ only.
\end{lemma}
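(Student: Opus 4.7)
The plan is to differentiate $E_k$ along the flow, substitute the equation $u_t = \mathbf{H}u - uu_x$, and verify that after an algebraic cancellation via Tricomi's identity only quartic-in-$u$ terms survive. These are then estimated in the stated form.

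For the quadratic piece of $E_k$,
\[
\tfrac{d}{dt}\tfrac{1}{2}\|\partial_x^k u\|_{L^2}^2 = \langle \partial_x^k u, \partial_x^k\mathbf{H}u\rangle - \langle \partial_x^k u, \partial_x^k(uu_x)\rangle = -\langle \partial_x^k u, \partial_x^k(uu_x)\rangle,
\]
since $\mathbf{H}$ is skew-adjoint and commutes with $\partial_x$. For the cubic correction in $E_k$, the product rule produces three contributions; into each I substitute $u_t$, obtaining a cubic-in-$u$ part (from the linear term $\mathbf{H}u$) and a quartic-in-$u$ part (from $-uu_x$). Using $\mathbf{H}^2 = -\mathbf{I}$ together with the skew-adjointness of $\mathbf{H}$, the three cubic pieces consolidate into
\[
\langle \partial_x^k u, \partial_x^k [\mathbf{H}u\cdot\mathbf{H}u_x - \mathbf{H}(u\,\mathbf{H}u_x) - \mathbf{H}(\mathbf{H}u\cdot u_x)]\rangle.
\]
Tricomi's identity $\mathbf{H}f\cdot\mathbf{H}g - fg = \mathbf{H}(f\,\mathbf{H}g) + \mathbf{H}(g\,\mathbf{H}f)$, applied with $f=u$ and $g=u_x$, identifies the bracket with $uu_x$, so the consolidated cubic contribution is $+\langle \partial_x^k u, \partial_x^k(uu_x)\rangle$ and cancels the surviving term above. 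This cancellation is precisely the reason for the cubic correction in the definition of $E_k$.

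After cancellation, what remains is a sum of three quartic terms obtained by substituting $-uu_x$ for $u_t$ in the three slots of the correction, each of the schematic form $\langle \partial_x^k u, \partial_x^k\mathbf{H}[\cdots]\rangle$ (or the analogous form with $\partial_x^k(uu_x)$ on the left), with four factors of $u$ carrying $2k+2$ total derivatives. I plan to distribute $\partial_x^k$ via the Leibniz rule and in each resulting product place the two least-differentiated factors in $L^\infty$ via the Sobolev embedding $H^{1/2+\delta}(\mathbb{R})\hookrightarrow L^\infty$, which yields $\|\partial_x^j u\|_{L^\infty}\lesssim \|u_x\|_{H^{j-1/2+\delta}}$ for $j\geq 1$, and the remaining two factors in $L^2$, bounded by $\|u\|_{H^k}$. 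For generic derivative distributions, this immediately yields the bound $\|u_x\|_{H^{1/2+\delta}}^2\|u\|_{H^k}^2$.

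The main obstacle is the extremal case in which $k$ or $k+1$ derivatives pile onto a single factor: a direct $L^2$ bound would then demand $H^{k+1}$ regularity, which we lack. To overcome this we integrate by parts, using the skew-adjointness of $\mathbf{H}$ and the commutation $\mathbf{H}\partial_x=\partial_x\mathbf{H}$ to symmetrize the top-order derivatives onto two copies of $\partial_x^k u$; this converts the offending terms into expressions of the form $\int a(x)\,(\partial_x^k u)^2\,dx$ where $a$ is a quadratic expression in $u$ involving only first-order derivatives. Since $H^{1/2+\delta}(\mathbb{R})$ is a Banach algebra, one has $\|a\|_{L^\infty}\lesssim\|u_x\|_{H^{1/2+\delta}}^2$, and assembling all contributions gives the stated estimate.
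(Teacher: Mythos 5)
Your overall strategy coincides with the paper's: differentiate $E_k$, observe that the cubic terms cancel (your explicit verification via Tricomi's identity is correct and matches the paper's assertion that the normal-form structure forces this cancellation), and then estimate the remaining quartic terms, which carry $2k+2$ derivatives distributed over four factors, by putting low-order factors in $L^\infty$ via $H^{1/2+\delta}(\Rl)\hookrightarrow L^\infty$ and the top-order factors in $L^2$. The generic ("good") terms are handled the same way in the paper.

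The gap is in your treatment of the extremal terms. You claim that skew-adjointness of $\mathbf{H}$ and $\mathbf{H}\partial_x=\partial_x\mathbf{H}$ let you symmetrize the top derivatives onto $(\partial_x^k u)^2$ with a coefficient $a$ that is quadratic in $u$ and involves \emph{only first-order} derivatives, so that the algebra property of $H^{1/2+\delta}$ finishes the job. Neither half of this survives scrutiny. First, the offending terms have the Hilbert transform wrapped around a product, e.g.\ $\int \Hi(u\,\partial_x^{k+1}u)\cdot \partial_x^{\alpha}\Hi u\cdot\partial_x^{k-\alpha}\Hi u_x\,dx$, and the second top-order factor sits in a different ``slot''; skew-adjointness moves the outer $\Hi$ across the pairing but does not pull it through the product, so the two copies of $\partial_x^k u$ cannot be brought together by integration by parts alone. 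The paper's mechanism is different: it writes $\Hi(u f)=u\Hi f+[\Hi,u]f$, observes that the principal (non-commutator) parts of the two paired terms in $I_\alpha$ \emph{cancel each other}, and controls the residual commutator $[\Hi,u]\partial_x^{k}u_x$ in $L^2$ by the Dawson--McGahagan--Ponce (Calder\'on-type) estimate \eqref{com}, which gains a full derivative. This commutator estimate is a genuine harmonic-analysis input that your argument never supplies, and without the pairing/cancellation the term with $k+1$ derivatives on one factor simply cannot be closed in $H^k$. Second, even where the symmetrization does produce $\int a\,(\partial_x^k \Hi u)^2\,dx$ (the paper's $J_k$ and $J_{k-1}$), the coefficient is $[\Hi,u_x]u_x$ or $[\Hi,u]u_{xx}$: the latter involves \emph{second} derivatives of $u$, and its two constituents $\Hi(u\,u_{xx})$ and $u\,\Hi u_{xx}$ are not individually bounded by $\|u_x\|_{H^{1/2+\delta}}^2$; only the commutator combination is, via the Littlewood--Paley argument behind \eqref{danny} (the commutator vanishes unless the frequency of $u$ dominates, which is what lets a derivative migrate off $u_{xx}$). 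The algebra property of $H^{1/2+\delta}$ alone does not give this. So the skeleton of your proof is right, but the two commutator estimates --- which are the real content of the lemma --- are missing, and the step that is supposed to replace them does not go through as stated.
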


\begin{proof}
A direct computation yields
\begin{equation*}
\begin{aligned}
-\frac{d}{dt} E_k(u) = &\int_{\mathbb{R}}  \partial_x^k (u u_x)  \partial_x^k
 \mathbf{H}\left[ \mathbf{H}u \cdot \mathbf{H}u_x\right] \, dx  \\
 +&\int_{\mathbb{R}} \partial_x^k u \left\lbrace  \partial_x^k
 \mathbf{H}\left[ \mathbf{H}[u u_x]   \cdot \mathbf{H} u_x\right]  +  \partial_x^k
 \mathbf{H}\left[ \mathbf{H}u  \cdot \mathbf{H} [u u_x]_x\right] \right\rbrace   dx .
 \end{aligned}
\end{equation*}
Note that all cubic terms cancel. This is because the energy was
determined using the normal form transformation, which at the level of
the energy amounts to eliminating the cubic terms.
 Using the antisymmetry of $H$ and integrating by parts
  we rewrite the above expression in the form
\[
-\frac{d}{dt} E_k(u) = \int_{\mathbb{R}}  - \partial_x^k \Hi (u u_x)  \partial_x^k
 \left[ \mathbf{H}u \cdot \mathbf{H}u_x\right]  + \partial_x^k \Hi u_x  \partial_x^k
 \left[ \mathbf{H}u \cdot \Hi (u u_x) \right] dx.
\]
We observe that in each term in the expansion of the above integrand
has the form
\[
\Hi \partial^{\alpha} u  \Hi \partial^{\beta} u \Hi (\partial^\gamma u\  \partial^\delta u)
\]
where
\[
\alpha + \beta + \gamma + \delta =  2k+2
\]
We observe that as long as
\[
1 \leq \alpha, \beta, \gamma, \delta \leq k, \qquad \alpha+\beta \geq 3,
\delta+ \gamma \geq 3
\]
such terms can be estimated directly in terms of the right hand side
of the energy relation in the Lemma. We will call such terms ``good''.

We return to the expression above, and observe that a cancellation occurs
if the second  $\partial_x^k$ operator in each of the two expressions applies
to the second factor. Using a full binomial expansion, we are left with the
expressions
\[
I_\alpha =  \int_{\mathbb{R}}  - \partial_x^k \Hi (u u_x)  \cdot \partial_x^{\alpha}
  \mathbf{H}u \cdot  \partial_x^{k-\alpha}\mathbf{H}u_x  +
\partial_x^k \Hi u_x \cdot
  \partial_x^{\alpha}\mathbf{H}u \cdot  \partial_x^{k-\alpha} \Hi (u u_x)  dx
\]
where $1 \leq \alpha \leq k$.  Distributing the remaining derivatives,
we get good terms if any derivatives fall on the $u$ factor
(an additional integration by parts  is needed to see this for the second term).
Thus we have
\[
I_\alpha =  \int_{\mathbb{R}}  - \Hi ( u  \partial_x^k u_x)  \cdot \partial_x^{\alpha}
  \mathbf{H}u \cdot  \partial_x^{k-\alpha}\mathbf{H}u_x  +
\partial_x^k \Hi u_x \cdot
  \partial_x^{\alpha}\mathbf{H}u \cdot  \Hi (u   \partial_x^{k-\alpha} u_x)  dx
+ good.
\]
If $u$ is commuted out the two expressions above cancel. Hence
\[
I_\alpha =  \int_{\mathbb{R}}  - [\Hi,u]  \partial_x^k u_x  \cdot \partial_x^{\alpha}
  \mathbf{H}u \cdot  \partial_x^{k-\alpha}\mathbf{H}u_x  +
\partial_x^k \Hi u_x \cdot
  \partial_x^{\alpha}\mathbf{H}u \cdot  [\Hi ,u ]  \partial_x^{k-\alpha} u_x  dx
+ good.
\]

The commutator $[\mathbf{H},u] f$ vanishes unless the frequency of $u$
is at least as large as the frequency of $f$.  Hence we can always
move derivatives from $f$ onto $u$.
Then for the first term in $I_{\alpha}$ we can use directly the commutator estimate below:
\begin{equation}\label{com}
\Vert \left[\mathbf{H},u \right]\partial_{x}^{k}u_x\Vert_{L^2}\lesssim\Vert  u_x \Vert _{L^{\infty}}\Vert \partial_{x}^ku  \Vert _{L^{2}}.
\end{equation}
This is a consequence of the commutator estimates result obtained by Dawson,
McGahagan, and Ponce in \cite{ponce}. We recall it below:
\begin{lemma} Let $\mathbf{H}$ denote the Hilbert transform. Then for
  any $p\in (1, \infty)$ and any $l, m\in \mathbb{Z}^+$ there exists
  $c=c(p,l,m) > 0$ such that
\begin{equation*}
\Vert \partial ^{l}_{x}[\mathbf{H}, a]\partial_{x}^{m}f\Vert_{L^{p}}\leq c\Vert \partial_{x}^{l+m}a\Vert _{L^{\infty}}\Vert f \Vert_{L^{p}}.
\end{equation*}
\end{lemma}

For the second in $I_{\alpha}$ we need to integrate once by parts since the first factor
has too many derivatives. If $\alpha < k$ this is done directly. If $\alpha = k$
then we move a derivative on the third factor. We are left with two terms which are nontrivial, namely
\[
J_k := \int |\partial_x^k \Hi u|^2  [\Hi ,u_x] u_x dx
\]
and
\[
J_{k-1} := \int |\partial_x^k \Hi u|^2  [\Hi ,u] u_{xx} dx.
\]
For these we need the pointwise bound
\begin{equation}\label{danny}
\|  [\Hi ,u_x] u_x\|_{L^\infty} + \|[\Hi ,u] u_{xx}\|_{L^\infty} \lesssim \|u_x\|_{H^{\frac12+\delta}}^2.
\end{equation}
Indeed, both terms on the left can be estimated in $H^{\frac12+\delta}$ using standard Littlewood-Paley
decompositions.
\end{proof}

By a Gronwall type argument, the two lemmas above lead to an $\epsilon^{-2}$ lifespan, proving part (a) of Theorem~\ref{th:main}.

%%%%%%%%%%%%%%%%%%%%%%%%%%%%%%%%%%%%%%%%%%%%%%%%%%%%%%%%%%%%%%%%%%%%%%%%%%%%%%%%%%%%%%%%%%%%%%%%%%%%%%%%%%%%%%%%
Next, in order to prove part (b) of Theorem~\ref{th:main}, we consider the linearized Burgers-Hilbert equation:
\begin{equation}
\label{linearization}
w_t+wu_x+uw_x=\mathbf{H}w.
\end{equation}

Beginning with the normal form transformation  (\ref{nft}) for the Burgers-Hilbert equation, we obtain the normal form transformation  for the linearization (\ref{linearization}):
\begin{equation*}
q:=w+\vert \partial_x\vert (\mathbf{H}w\cdot\mathbf{H}u).
\end{equation*}
where $\vert \partial_x\vert = \mathbf{H}\,\circ \, \partial_{x}$.
Inspired by this we can define a modified linearized energy
\begin{equation}
\label{lin energy}
E_{lin}(w)=
\int w^2+2\vert \partial_x\vert w\cdot \mathbf{H}u\cdot\mathbf{H} w\,dx
\end{equation}
This is equivalent to the $L^2$ norm of $w$,
\[
E_{lin}(w) = (1+O(\|\Hi u_x\|_{L^\infty}) \|w\|_{L^2}^2.
\]
This is proved using integration by parts in order to write
\begin{equation*}
E_{lin}(w)
=\int w^2-\vert \partial_{x}\vert u\cdot \left( \mathbf{H}w\right) ^2\,dx.
\end{equation*}

We want to prove that $E_{lin}$ satisfies good cubic bounds; a Gronwall type  argument implies part $(b)$ of Theorem~\ref{th:main}.
\begin{lemma}
For the energy (\ref{lin energy}) the estimate below holds: for any $\delta>0$,
\begin{equation*}
\frac{d}{dt} E_{lin}(w) \leq C_{k,\delta}  \Vert u_x\Vert _{H^{\frac12+\delta}}^2 \|w\|_{L^2}^2
\end{equation*}
where $C_{k,\delta}$ is a constant depending on $k$ and $\delta$ only.
\end{lemma}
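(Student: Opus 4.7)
The plan mirrors the energy estimate for the preceding lemma, but now the low-regularity factor is $w \in L^2$, so every derivative that arises must be shifted onto $u$. I would first differentiate $E_{lin}(w)$ in the form $\int w^2 - |\partial_x| u \cdot (\mathbf{H}w)^2\, dx$, substitute $w_t = \mathbf{H}w - wu_x - uw_x$ and $u_t = \mathbf{H}u - uu_x$, and group the resulting terms by their total order in $u$: pieces linear in $u$ (quadratic in $w$) and pieces quadratic in $u$ (quadratic in $w$).

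The linear-in-$u$ pieces have to cancel, as this is the design principle behind the normal form. After using $\mathbf{H}|\partial_x| = -\partial_x$ and a couple of integrations by parts, their sum reduces to
\[
-\int u_x w^2\, dx + \int u_x (\mathbf{H}w)^2\, dx + 2\int |\partial_x| u \cdot w\,\mathbf{H}w\, dx,
\]
which vanishes by the Tricomi identity $(\mathbf{H}w)^2 - w^2 = 2\mathbf{H}(w\,\mathbf{H}w)$ combined with the skew-adjointness of $\mathbf{H}$. The remaining contributions are quadratic in $u$, namely
\[
\int |\partial_x|(u u_x)(\mathbf{H}w)^2\, dx + 2\int |\partial_x|u \cdot \mathbf{H}w \cdot \mathbf{H}(w u_x + u w_x)\, dx.
\]
Using the identity $\mathbf{H}(u w_x) = |\partial_x|(u w) - \mathbf{H}(u_x w)$, the $\mathbf{H}(w u_x)$ and $\mathbf{H}(u_x w)$ contributions cancel, collapsing the second integral to $2\int |\partial_x|u \cdot \mathbf{H}w \cdot |\partial_x|(u w)\, dx$, an expression in which the stray $w_x$ has been eliminated.

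What remains is to estimate both of these expressions by $\|u_x\|_{H^{1/2+\delta}}^2 \|w\|_{L^2}^2$. I would proceed exactly as in the previous lemma: apply the Dawson-McGahagan-Ponce commutator estimates to shift the $|\partial_x|$'s off $w$ and onto $u$, and use the pointwise bound \eqref{danny} together with $\||\partial_x|u\|_{L^\infty} = \|\mathbf{H}u_x\|_{L^\infty} \lesssim \|u_x\|_{H^{1/2+\delta}}$ (Sobolev embedding) to estimate the resulting $L^\infty$ factors. The main obstacle throughout is ensuring that no derivative of $w$ survives the estimate; the two cancellations identified above, combined with integrations by parts exploiting $\mathbf{H}w \cdot |\partial_x| w = \tfrac12 \partial_x(\mathbf{H}w)^2$ (valid since $|\partial_x| = \partial_x \mathbf{H}$), accomplish precisely this by transferring any residual derivative onto the $u$-factors, where it is absorbed into the $\|u_x\|_{H^{1/2+\delta}}^2$ bound.
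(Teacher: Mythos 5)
Your proposal is correct and follows essentially the same route as the paper: after the normal-form cancellation of the terms linear in $u$, the quartic remainder is reduced to exactly the two ingredients the paper uses, namely the commutator estimate \eqref{com} (via Dawson--McGahagan--Ponce) and the pointwise bound \eqref{danny}. The only difference is organizational --- you combine $wu_x+uw_x=\partial_x(uw)$ before splitting off the commutator $[\,|\partial_x|,u]w$, whereas the paper keeps $\mathbf{H}[wu_x]$ as a directly estimable term and writes $\mathbf{H}[uw_x]=u\mathbf{H}w_x+[\mathbf{H},u]\partial_x w$ --- but both reorganizations produce, after the integration by parts $\mathbf{H}w\cdot|\partial_x|w=\tfrac12\partial_x(\mathbf{H}w)^2$, the same coefficient $\partial_x\left([\mathbf{H},u]\partial_x u\right)=[\mathbf{H},u_x]u_x+[\mathbf{H},u]u_{xx}$ multiplying $(\mathbf{H}w)^2$.
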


\begin{proof}
A straightforward computation leads to
\begin{equation*}
\begin{aligned}
\frac{d}{dt}E_{lin}(w)&=\int 2\mathbf{H}w \cdot\mathbf{H}\left[wu_x \right]\cdot \mathbf{H}u_x+ 2\mathbf{H}w \cdot\mathbf{H}\left[uw_x \right]\cdot \mathbf{H}u_x
\\
&\hskip1in+ \vert \mathbf{H}w\vert^2 \cdot\vert \partial_{x}\vert (uu_x) \, dx\\
&=\int 2\mathbf{H}w \cdot\mathbf{H}\left[wu_x \right]\cdot \mathbf{H}u_x+2\mathbf{H}w\cdot \left[\mathbf{H},u \right]\partial_{x}w\cdot \mathbf{H}u_x
\\
&\hskip1in +
\vert \mathbf{H}w\vert^2 \partial_{x} \left( \left[ \mathbf{H}, u \right] \partial_{x}u \right)  \, dx.
\end{aligned}
\end{equation*}
The first term is estimated directly and for the second we use the commutator bound \eqref{com}. The third term can be controlled because we can rewrite
\[
\partial_{x} \left( \left[ \mathbf{H}, u \right] \partial_{x}u \right) = \left[ \mathbf{H}, u_x \right] \partial_{x}u + \left[ \mathbf{H}, u \right] \partial_{x}^2u,
\]
where the right hand side can be estimated by the pointwise bound \eqref{danny}.
\end{proof}

\end{document}